\newenvironment{config}{\tt\small\setlength{\parskip}{1pt}}{}
\newtheorem{theorem}{Theorem}[section]
\newtheorem{proposition}[theorem]{Proposition}
\title{Fragments in symmetric configurations with block size 3}
\author{
Grahame Erskine\thanks{Open University, Milton Keynes, UK}\footnotemark[1]\\ \texttt{\small grahame.erskine@open.ac.uk}
\and Terry Griggs\footnotemark[1]\\ \texttt{\small terry.griggs@open.ac.uk}
\and Jozef \v{S}ir\'a\v{n}\thanks{Open University, Milton Keynes, UK and Slovak University of Technology, Bratislava, Slovakia}\footnotemark[2]\\ \texttt{\small jozef.siran@open.ac.uk}
}
\date{}
\begin{document}
\maketitle
\let\thefootnote\relax\footnote{Mathematics subject classification: 05B30}
\let\thefootnote\relax\footnote{Keywords: configuration; fragment}

\vspace*{-8ex}
\begin{abstract}
We begin the study of collections of three blocks which can occur in a symmetric configuration with block size 3, $v_3$. Formulae are derived for the number of occurrences of these and it is shown that the triangle, i.e. abf, ace, bcd is a basis. It is also shown that symmetric configurations without triangles exist if and only if $v=15$ or $v \geq 17$. Such configurations containing ``many'' triangles are also discussed and a complete analysis of the triangle content of those with a cyclic automorphism is given.
\end{abstract}

\section{Introduction}\label{sec:intro}
In this paper we will be concerned with symmetric configurations with block size 3. First we recall the definitions. A \emph{configuration} $(v_r,b_k)$ is a finite incidence structure with $v$ points and $b$ blocks, with the property that there exist positive integers $k$ and $r$ such that
\begin{enumerate}[label=(\roman*),topsep=0pt,itemsep=0pt]
	\item each block contains exactly $k$ points;
	\item each point is contained in exactly $r$ blocks; and
	\item any pair of distinct points is contained in at most one block.
\end{enumerate}

If $v=b$ (and hence necessarily $r=k$), the configuration is called \emph{symmetric} and is usually denoted by $v_k$. We are interested in the case where $k=3$. The blocks will also be called \emph{triples}. 
A configuration is said to be \emph{decomposable} or \emph{disconnected} if it is the union of two configurations on distinct point sets. We are primarily interested in indecomposable (connected) configurations, and so unless otherwise noted, this is assumed throughout the paper.

It is natural to associate two graphs with a symmetric configuration $v_3$. The first is the \emph{Levi graph} or \emph{point-block incidence graph}, obtained by considering the $v$ points and $v$ blocks of a configuration as vertices, and including an edge from a point to every block containing it. It follows that the Levi graph is a cubic (3-regular) bipartite graph of girth at least six. The second graph is the \emph{incidence graph}, obtained by considering only the points as vertices and joining two points by an edge if and only if they appear together in some block. Thus the incidence graph is regular of valency 6 and order~$v$.

The above definition of configuration is the classical one, going back to the nineteenth century and the work of mathematicians such as Kantor~\cite{K} and Martinetti~\cite{M}. It is the meaning of the term in the section of the Handbook of Combinatorial Designs by Gropp~\cite{G}. However, in the last century the term came to be used more widely to mean any (small) collection of blocks which may appear in a combinatorial structure. This is the sense in which the term is used for example in chapter 13 of Triple Systems by Colbourn \&~Rosa~\cite{CR}. For the present paper this double usage of the term is particularly unfortunate as we wish to study the occurrence of (20th century) configurations in (19th century) configurations. Therefore, for this reason we will define any collection of blocks or triples which may appear in a symmetric configuration $v_3$ as a \emph{fragment}.

We begin by developing formulae for the number of occurrences of one-, two- and three-block fragments in symmetric configurations $v_3$. A one-block fragment is of course just a single triple and there are $v$ of these by definition. There are two two-block fragments: $A_1$, a pair of disjoint triples and $A_2$, a pair of intersecting triples. Here, and below, we follow the terminology used in~\cite{CR}. It is elementary to derive the corresponding formulae. Trivially $a_2 = 3v$ and so $a_1 = v(v-1)/2 -3v = v(v-7)/2$. Both of these formulae are called \emph{constant} meaning that they depend only on $v$ and so do not vary over all configurations $v_3$, irrespective of the structure of the individual configurations.  Other fragments are called \emph{variable}. 

There are five three-block fragments. Omitting set brackets and commas for simplicity these are $B_1$, abc, def, ghi (3-partial parallel class or 3-PPC); $B_2$, abc, def, dgh (hut); $B_3$, abc, ade, afg (3-star); $B_4$, abc, cde, efg (3-path); $B_5$, abf, ace, bcd (triangle). These are illustrated in Figure~\ref{fig:fragments}.

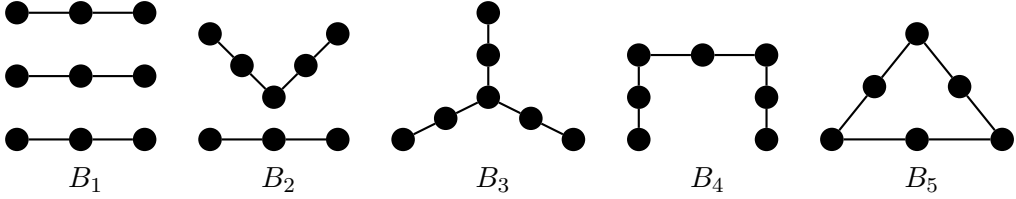
\begin{figure}\centering
\begin{tabular}{ccccc}
	\begin{tikzpicture}[x=0.2mm,y=-0.2mm,inner sep=0.2mm,scale=0.7,thick,vertex/.style={circle,draw,minimum size=8,fill=black}]
\node at (260,280) [vertex] (v1) {};
\node at (320,280) [vertex] (v2) {};
\node at (380,280) [vertex] (v3) {};
\node at (260,340) [vertex] (v4) {};
\node at (320,340) [vertex] (v5) {};
\node at (380,340) [vertex] (v6) {};
\node at (260,400) [vertex] (v7) {};
\node at (320,400) [vertex] (v8) {};
\node at (380,400) [vertex] (v9) {};
\path
	(v1) edge (v2)
	(v2) edge (v3)
	(v4) edge (v5)
	(v5) edge (v6)
	(v7) edge (v8)
	(v8) edge (v9)
	;
\end{tikzpicture} &
	\begin{tikzpicture}[x=0.2mm,y=-0.2mm,inner sep=0.2mm,scale=0.7,thick,vertex/.style={circle,draw,minimum size=8,fill=black}]
\node at (260,300) [vertex] (v1) {};
\node at (290,330) [vertex] (v2) {};
\node at (320,360) [vertex] (v3) {};
\node at (350,330) [vertex] (v4) {};
\node at (380,300) [vertex] (v5) {};
\node at (260,400) [vertex] (v7) {};
\node at (320,400) [vertex] (v8) {};
\node at (380,400) [vertex] (v9) {};
\path
	(v1) edge (v2)
	(v2) edge (v3)
	(v4) edge (v5)
	(v7) edge (v8)
	(v8) edge (v9)
	(v3) edge (v4)
	;
\end{tikzpicture} &
	\begin{tikzpicture}[x=0.2mm,y=-0.2mm,inner sep=0.2mm,scale=0.7,thick,vertex/.style={circle,draw,minimum size=8,fill=black}]
\node at (320,300) [vertex] (v1) {};
\node at (320,340) [vertex] (v2) {};
\node at (320,380) [vertex] (v3) {};
\node at (360,400) [vertex] (v4) {};
\node at (400,420) [vertex] (v5) {};
\node at (240,420) [vertex] (v7) {};
\node at (280,400) [vertex] (v8) {};
\path
	(v1) edge (v2)
	(v2) edge (v3)
	(v4) edge (v5)
	(v7) edge (v8)
	(v3) edge (v4)
	(v3) edge (v8)
	;
\end{tikzpicture} &
	\begin{tikzpicture}[x=0.2mm,y=-0.2mm,inner sep=0.2mm,scale=0.7,thick,vertex/.style={circle,draw,minimum size=8,fill=black}]
\node at (380,340) [vertex] (v1) {};
\node at (320,340) [vertex] (v2) {};
\node at (260,340) [vertex] (v3) {};
\node at (380,380) [vertex] (v4) {};
\node at (380,420) [vertex] (v5) {};
\node at (260,420) [vertex] (v7) {};
\node at (260,380) [vertex] (v8) {};
\path
	(v1) edge (v2)
	(v2) edge (v3)
	(v4) edge (v5)
	(v7) edge (v8)
	(v3) edge (v8)
	(v1) edge (v4)
	;
\end{tikzpicture} &
	\begin{tikzpicture}[x=0.2mm,y=-0.2mm,inner sep=0.2mm,scale=0.7,thick,vertex/.style={circle,draw,minimum size=8,fill=black}]
\node at (360,330) [vertex] (v1) {};
\node at (320,280) [vertex] (v2) {};
\node at (280,330) [vertex] (v3) {};
\node at (400,380) [vertex] (v4) {};
\node at (320,380) [vertex] (v7) {};
\node at (240,380) [vertex] (v8) {};
\path
	(v1) edge (v2)
	(v2) edge (v3)
	(v7) edge (v8)
	(v3) edge (v8)
	(v1) edge (v4)
	(v4) edge (v7)
	;
\end{tikzpicture} \\
	$B_1$ & $B_2$ & $B_3$ & $B_4$ & $B_5$
\end{tabular}
\caption{The possible three-block fragments}
\label{fig:fragments}
\end{figure}

Let the number of occurrences of fragment $B_i$ be $b_i$, $1 \leq i \leq 5$.  The number of triangles in a symmetric configuration $v_3$ is equal to the number of 6-cycles in its Levi graph and is therefore variable. Let $b_5 = t$. Trivially $b_3 = v$. To determine $b_4$, consider a pair of intersecting triples and the eight three-block fragments formed by this pair of triples and the third block through any of the four points other than the point of intersection. Then $3v \times 8 = 6t + 2b_4$ from which $b_4 = 3(4v - t)$. For $b_2$, again consider a pair of intersecting triples and a third block through any other point. Then $3v(v - 5) \times 3 = 3t + 4b_4 + 6b_3 + 3b_2$ from which $b_2 =  3(v(v - 11) + t)$. Finally for $b_1$, consider a pair of disjoint triples and a third block through any other point. Then $v(v - 7)(v - 6) \times 3 /2 = b_4 + 4b_2 + 9b_1$ from which $b_1 = (v^3 - 21v^2 + 122v - 6t)/6$. For ease of reference we collect these formulae together in a table.
\begin{center}
\begin{tabular}{l}
     $b_1 = (v^3 - 21v^2 + 122v - 6t)/6$,\\
     $b_2 =  3(v(v - 11) + t)$,\\
     $b_3 = v$,\\
     $b_4 = 3(4v - t)$,\\ 
     $b_5 = t$.
\end{tabular}
\end{center}

From the above, $0 \leq t \leq 4v$. It is natural to ask whether symmetric configurations $v_3$ with values of $t$ at either end of the spectrum exist and how these may be constructed. At the lower end, configurations with no triangles exist for $v=15$ and all $v \geq 17$. This is proved in Section~\ref{sec:notri}. At the upper end, the unique configuration $7_3$, which is the Fano plane, contains 28 triangles and is the only configuration attaining the upper bound. Configurations with many triangles are studied in Section~\ref{sec:manytri}. Section~\ref{sec:cyclic} deals with cyclic configurations, i.e. those with a cyclic automorphism. Finally in Section~\ref{sec:conclu} we discuss some further investigations suggested by the work in this paper. Computer results on the distribution of the values of $t$ for all configurations $v_3$ for $7 \leq v \leq 16$ are given in Table~\ref{tab:triangles} in the Appendix at the end of the paper. 

\section{Configurations with no triangles}\label{sec:notri}
As stated above, the number of triangles in a symmetric configuration $v_3$ is equal to the number of 6-cycles in its Levi graph. Therefore in order to construct configurations with no triangles it is both necessary and sufficient to construct cubic bipartite graphs with girth equal to 8 (or greater than or equal to 8). We do this recursively, using the following result.

\begin{proposition}\label{vplus10}
Let $G$ be a cubic bipartite graph of order $2v$, of girth 8 and which also contains a 10-cycle. Then there exists a cubic bipartite graph $G^*$ of order $2v+10$, also of girth 8 and containing a 10-cycle.
\end{proposition}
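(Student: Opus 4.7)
The plan is to construct $G^*$ explicitly by a local surgery on a 10-cycle $C$ of $G$. Label $C = u_0 u_1 \cdots u_9 u_0$ with $u_i$ a point when $i$ is even and a block when $i$ is odd. First, I would subdivide each of the five edges $u_{2i} u_{2i+1}$ ($i = 0, \ldots, 4$) by inserting a new block $q_i$ and a new point $p_i$, replacing the edge by the length-three path $u_{2i}\,q_i\,p_i\,u_{2i+1}$; the use of two inserted vertices per subdivision is forced by bipartiteness. This contributes exactly five new points and five new blocks, each of degree two. To restore cubicity I would then add the five ``chord'' edges $q_i p_{i+3 \bmod 5}$. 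By construction $G^*$ is bipartite and cubic of order $2v+10$, and the ten new vertices together with alternating chord edges and subdivision edges $p_j q_j$ trace out the 10-cycle
\[
q_0\, p_3\, q_3\, p_1\, q_1\, p_4\, q_4\, p_2\, q_2\, p_0\, q_0.
\]

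The main task, and principal obstacle, is verifying that $G^*$ still has girth at least $8$. I would analyse a putative short cycle $D$ of $G^*$ by the number $k$ of chord edges it contains. For $k = 0$, $D$ lies in the graph obtained from $G$ by subdividing the five edges $u_{2i} u_{2i+1}$; contracting each subdivision path back to a single edge yields a cycle of $G$ of length at most $|D|$, so $|D| \ge \mathrm{girth}(G) = 8$. For $k \ge 1$, a bipartite parity check rules out two chord edges meeting at a common endpoint, and the chord edges of $D$ are therefore joined by intervening sub-paths consisting of subdivision edges, remnant $C$-edges $u_{2i+1} u_{2i+2}$, or detours through $G - E(C)$. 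The girth-$8$ hypothesis on $G$ supplies the standard lower bound that any off-$C$ path between two vertices of $C$ at cyclic distance $d \le 5$ has length at least $8 - d$, which controls the detour sub-paths.

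The specific choice $\sigma(i) = i + 3 \pmod{5}$ for the chord permutation is made precisely so that the identity $\sigma(\sigma(i) + 1) \ne i$ holds for every $i \in \mathbb{Z}_5$; this is exactly what prevents the natural 6-cycle
\[
q_a\, p_{\sigma(a)}\, u_{2\sigma(a)+1}\, u_{2\sigma(a)+2}\, q_{\sigma(a)+1}\, p_a\, q_a,
\]
which \emph{does} appear under the naive alternative $\sigma(i) = i + 2$. A finite case analysis over $k = 1, 2, \ldots, 5$, combined with the parity and distance bounds above, then rules out any cycle of $G^*$ of length less than $8$ that uses at least one chord edge. An explicit $8$-cycle such as $q_0\, u_0\, u_9\, p_4\, q_4\, u_8\, u_7\, p_3\, q_0$ (arising directly from the surgery) confirms that the girth of $G^*$ is exactly $8$, and together with the 10-cycle displayed above this gives all the properties required.
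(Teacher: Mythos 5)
Your construction is the paper's construction up to relabelling: double-subdividing the five alternating edges $u_{2i}u_{2i+1}$ of the 10-cycle and adding the chords $q_ip_{i+3 \bmod 5}$ yields exactly the graph the paper builds by deleting those five edges and gluing in the gadget $H$, whose internal 10-cycle $u_0,u_3,u_4,u_7,u_8,u_1,u_2,u_5,u_6,u_9$ realises the same ``$+3$'' chording you chose (your observation that the naive ``$+2$'' chording creates a 6-cycle, via the condition $\sigma(\sigma(i)+1)\neq i$, is precisely the reason for this choice, which the paper leaves implicit). The girth verification proceeds by the same kind of case-split as the paper's (yours indexed by the number of chord edges, the paper's by the number of $G'$-edges), and although you assert rather than fully execute the finite analysis for $k\geq 1$, the tools you state --- bipartite parity, the bound of $8-d$ for off-cycle paths between cycle vertices at distance $d$, and the chord-permutation condition --- are exactly what is needed and do close every case, so the proposal is correct and essentially identical to the paper's proof.
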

\begin{proof}
Consider the graph $G$. Denote the vertices of the 10-cycle by $v_0,v_1,\ldots,v_9$. Let the third vertex to which the vertex $v_i$ is adjacent be $w_i,~0 \leq i \leq 9$. The vertices $w_i$ are distinct. Further the only two such vertices which can be adjacent are $\{w_i, w_{i+5}\},~0 \leq i \leq 4$. Delete the five edges $\{v_{2i-1},v_{2i}\},~1 \leq i \leq 5$, subscripts modulo 10, to form a graph $G'$. Now construct a graph $H$ consisting of a 10-cycle $u_0,u_3,u_4,u_7,u_8,u_1,u_2,u_5,u_6,u_9$ where in addition each vertex $u_i$ is adjacent to a vertex~$v_i$. Identify the vertices $v_i$ in the two graphs $G'$ and $H$ to form the graph $G^*$. Clearly the graph is cubic, bipartite (as represented by the black/white colouring of the vertices as shown in Figure~\ref{fig:girth8}), of order $2v+10$ and contains a 10-cycle. It remains to prove that it has girth 8.

\begin{figure}\centering
    \begin{tabular}{cc}
        \begin{tikzpicture}[x=0.2mm,y=-0.2mm,inner sep=0.2mm,scale=1,thick,vertex/.style={circle,draw,minimum size=8,fill=lightgray}]
\node at (324,227) [vertex,fill=black,label=above left:{$v_0$}] (v1) {};
\node at (416,227) [vertex,fill=white,label=above right:{$v_1$}] (v2) {};
\node at (491,282) [vertex,fill=black,label=above right:{$v_2$}] (v3) {};
\node at (520,370) [vertex,fill=white,label=right:{$v_3$}] (v4) {};
\node at (491,458) [vertex,fill=black,label=below right:{$v_4$}] (v5) {};
\node at (416,513) [vertex,fill=white,label=below right:{$v_5$}] (v6) {};
\node at (324,513) [vertex,fill=black,label=below left:{$v_6$}] (v7) {};
\node at (249,458) [vertex,fill=white,label=below left:{$v_7$}] (v8) {};
\node at (220,370) [vertex,fill=black,label=left:{$v_8$}] (v9) {};
\node at (249,282) [vertex,fill=white,label=above left:{$v_9$}] (v10) {};
\node at (393,446) [vertex,fill=black,label=left:{$w_5$}] (v11) {};
\node at (347,294) [vertex,fill=white,label=left:{$w_0$}] (v12) {};
\node at (396,295) [vertex,fill=black,label=right:{$w_1$}] (v13) {};
\node at (435,325) [vertex,fill=white,label=below right:{$w_2$}] (v14) {};
\node at (307,322) [vertex,fill=black,label=below left:{$w_9$}] (v15) {};
\node at (291,368) [vertex,fill=white,label=above left:{$w_8$}] (v16) {};
\node at (305,415) [vertex,fill=black,label=above left:{$w_7$}] (v17) {};
\node at (344,445) [vertex,fill=white,label=left:{$w_6$}] (v18) {};
\node at (433,418) [vertex,fill=white,label=above right:{$w_4$}] (v19) {};
\node at (449,372) [vertex,fill=black,label=above right:{$w_3$}] (v20) {};
\node at (336,334) (v21) {};
\node at (404,402) (v22) {};
\node at (322,360) (v23) {};
\node at (327,346) (v24) {};
\node at (362,320) (v25) {};
\node at (417,375) (v26) {};
\node at (412,389) (v27) {};
\node at (377,320) (v28) {};
\node at (336,402) (v29) {};
\node at (362,415) (v30) {};
\node at (377,415) (v31) {};
\node at (392,325) (v32) {};
\node at (348,325) (v33) {};
\node at (404,334) (v34) {};
\node at (417,360) (v35) {};
\node at (322,375) (v36) {};
\node at (327,389) (v37) {};
\node at (348,410) (v38) {};
\node at (392,410) (v39) {};
\node at (413,346) (v40) {};
\path
	(v1) edge (v2)
	(v2) edge (v3)
	(v3) edge (v4)
	(v4) edge (v5)
	(v5) edge (v6)
	(v6) edge (v7)
	(v7) edge (v8)
	(v8) edge (v9)
	(v9) edge (v10)
	(v10) edge (v1)
	(v1) edge (v12)
	(v2) edge (v13)
	(v3) edge (v14)
	(v4) edge (v20)
	(v5) edge (v19)
	(v6) edge (v11)
	(v7) edge (v18)
	(v8) edge (v17)
	(v9) edge (v16)
	(v10) edge (v15)
	(v15) edge (v24)
	(v15) edge (v21)
	(v12) edge (v33)
	(v12) edge (v25)
	(v13) edge (v28)
	(v13) edge (v32)
	(v14) edge (v34)
	(v14) edge (v40)
	(v20) edge (v35)
	(v20) edge (v26)
	(v19) edge (v27)
	(v19) edge (v22)
	(v11) edge (v39)
	(v11) edge (v31)
	(v18) edge (v30)
	(v18) edge (v38)
	(v17) edge (v29)
	(v17) edge (v37)
	(v16) edge (v36)
	(v16) edge (v23)
	;
\end{tikzpicture} & 
        \begin{tikzpicture}[x=0.2mm,y=-0.2mm,inner sep=0.2mm,scale=1,thick,vertex/.style={circle,draw,minimum size=8,fill=lightgray}]
\node at (324,227) [vertex,fill=black,label=above left:{$v_0$}] (v1) {};
\node at (416,227) [vertex,fill=white,label=above right:{$v_1$}] (v2) {};
\node at (491,282) [vertex,fill=black,label=above right:{$v_2$}] (v3) {};
\node at (520,370) [vertex,fill=white,label=right:{$v_3$}] (v4) {};
\node at (491,458) [vertex,fill=black,label=below right:{$v_4$}] (v5) {};
\node at (416,513) [vertex,fill=white,label=below right:{$v_5$}] (v6) {};
\node at (324,513) [vertex,fill=black,label=below left:{$v_6$}] (v7) {};
\node at (249,458) [vertex,fill=white,label=below left:{$v_7$}] (v8) {};
\node at (220,370) [vertex,fill=black,label=left:{$v_8$}] (v9) {};
\node at (249,282) [vertex,fill=white,label=above left:{$v_9$}] (v10) {};
\node at (393,446) [vertex,fill=black,label=left:{$w_5$}] (v11) {};
\node at (347,294) [vertex,fill=white,label=left:{$w_0$}] (v12) {};
\node at (396,295) [vertex,fill=black,label=right:{$w_1$}] (v13) {};
\node at (435,325) [vertex,fill=white,label=below right:{$w_2$}] (v14) {};
\node at (307,322) [vertex,fill=black,label=below left:{$w_9$}] (v15) {};
\node at (291,368) [vertex,fill=white,label=above left:{$w_8$}] (v16) {};
\node at (305,415) [vertex,fill=black,label=above left:{$w_7$}] (v17) {};
\node at (344,445) [vertex,fill=white,label=left:{$w_6$}] (v18) {};
\node at (433,418) [vertex,fill=white,label=above right:{$w_4$}] (v19) {};
\node at (449,372) [vertex,fill=black,label=above right:{$w_3$}] (v20) {};
\node at (336,334) (v21) {};
\node at (404,402) (v22) {};
\node at (322,360) (v23) {};
\node at (327,346) (v24) {};
\node at (362,320) (v25) {};
\node at (417,375) (v26) {};
\node at (412,389) (v27) {};
\node at (377,320) (v28) {};
\node at (336,402) (v29) {};
\node at (362,415) (v30) {};
\node at (377,415) (v31) {};
\node at (392,325) (v32) {};
\node at (348,325) (v33) {};
\node at (404,334) (v34) {};
\node at (417,360) (v35) {};
\node at (322,375) (v36) {};
\node at (327,389) (v37) {};
\node at (348,410) (v38) {};
\node at (392,410) (v39) {};
\node at (413,346) (v40) {};
\path
	(v1) edge (v2)
	(v3) edge (v4)
	(v5) edge (v6)
	(v7) edge (v8)
	(v9) edge (v10)
	(v1) edge (v12)
	(v2) edge (v13)
	(v3) edge (v14)
	(v4) edge (v20)
	(v5) edge (v19)
	(v6) edge (v11)
	(v7) edge (v18)
	(v8) edge (v17)
	(v9) edge (v16)
	(v10) edge (v15)
	(v15) edge (v24)
	(v15) edge (v21)
	(v12) edge (v33)
	(v12) edge (v25)
	(v13) edge (v28)
	(v13) edge (v32)
	(v14) edge (v34)
	(v14) edge (v40)
	(v20) edge (v35)
	(v20) edge (v26)
	(v19) edge (v27)
	(v19) edge (v22)
	(v11) edge (v39)
	(v11) edge (v31)
	(v18) edge (v30)
	(v18) edge (v38)
	(v17) edge (v29)
	(v17) edge (v37)
	(v16) edge (v36)
	(v16) edge (v23)
	;
\end{tikzpicture} \\
        $G$ & $G'$ \\
        \multicolumn{2}{c}{\begin{tikzpicture}[x=0.2mm,y=-0.2mm,inner sep=0.2mm,scale=1,thick,vertex/.style={circle,draw,minimum size=8,fill=lightgray}]
\node at (324,227) [vertex,fill=black,label=above left:{$u_3$}] (v1) {};
\node at (416,227) [vertex,fill=white,label=above right:{$u_4$}] (v2) {};
\node at (491,282) [vertex,fill=black,label=above right:{$u_7$}] (v3) {};
\node at (520,370) [vertex,fill=white,label=right:{$u_8$}] (v4) {};
\node at (491,458) [vertex,fill=black,label=below right:{$u_1$}] (v5) {};
\node at (416,513) [vertex,fill=white,label=below right:{$u_2$}] (v6) {};
\node at (324,513) [vertex,fill=black,label=below left:{$u_5$}] (v7) {};
\node at (249,458) [vertex,fill=white,label=below left:{$u_6$}] (v8) {};
\node at (220,370) [vertex,fill=black,label=left:{$u_9$}] (v9) {};
\node at (249,282) [vertex,fill=white,label=above left:{$u_0$}] (v10) {};
\node at (393,446) [vertex,fill=black,label=left:{$v_2$}] (v11) {};
\node at (347,294) [vertex,fill=white,label=left:{$v_3$}] (v12) {};
\node at (396,295) [vertex,fill=black,label=right:{$v_4$}] (v13) {};
\node at (435,325) [vertex,fill=white,label=below right:{$v_7$}] (v14) {};
\node at (307,322) [vertex,fill=black,label=below left:{$v_0$}] (v15) {};
\node at (291,368) [vertex,fill=white,label=above left:{$v_9$}] (v16) {};
\node at (305,415) [vertex,fill=black,label=above left:{$v_6$}] (v17) {};
\node at (344,445) [vertex,fill=white,label=left:{$v_5$}] (v18) {};
\node at (433,418) [vertex,fill=white,label=above right:{$v_1$}] (v19) {};
\node at (449,372) [vertex,fill=black,label=above right:{$v_8$}] (v20) {};
\path
	(v1) edge (v2)
	(v2) edge (v3)
	(v3) edge (v4)
	(v4) edge (v5)
	(v5) edge (v6)
	(v6) edge (v7)
	(v7) edge (v8)
	(v8) edge (v9)
	(v9) edge (v10)
	(v10) edge (v1)
	(v1) edge (v12)
	(v2) edge (v13)
	(v3) edge (v14)
	(v4) edge (v20)
	(v5) edge (v19)
	(v6) edge (v11)
	(v7) edge (v18)
	(v8) edge (v17)
	(v9) edge (v16)
	(v10) edge (v15)
	;
\end{tikzpicture}} \\
        \multicolumn{2}{c}{$H$}
    \end{tabular}
    \caption{Graphs from the proof of Proposition~\ref{vplus10}}
    \label{fig:girth8}
\end{figure}
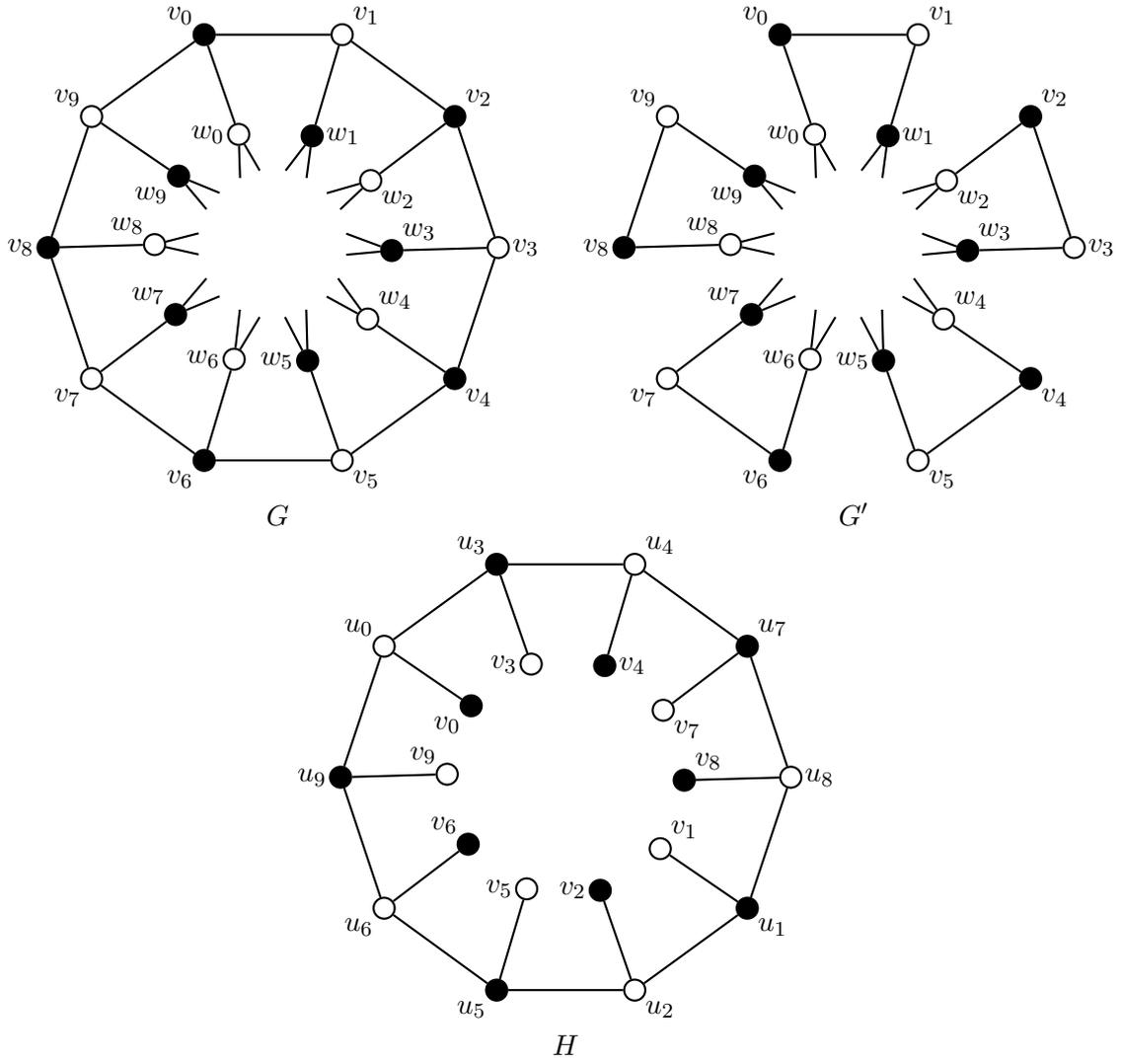
Let $C$ be a cycle of $G^*$.  If $C$ contains only edges of $G'$ or only edges of $H$, then $C$ has length at least 8. If not, then $C$ must contain at least three edges from $H$ including two edges $\{u_m,v_m\}$ and $\{u_n,v_n\}$, $0 \leq m,n \leq 9$. We need to show that it now contains at least five edges from $G'$. If $C$ contains only one edge from $G'$ it would have to be $\{v_{2i},v_{2i+1}\}$ for some $i$ such that $0 \leq i \leq 4$. But then $C$ would actually contain seven edges from $H$ and so have length 8. Clearly $C$ cannot contain only two edges from $G'$. If it contains just three edges then these would have to be $\{v_i,w_i\}$, $\{w_i,w_{i+5}\}$ and $\{w_{i+5},v_{i+5}\}$ for some $i$ such that $0 \leq i \leq 4$ and then $C$ would contain five edges from $H$ and so again have length 8. The remaining possibility is that $C$ contains just four edges from $G'$, in which case vertices $v_m$ and $v_n$ are in the same partition and their distance in $H$ is at least 4.
\end{proof}
We can now prove the main result of this section.
\begin{theorem}\label{avoidtri}
There exists a symmetric configuration $v_3$ with no triangles if and only if  $v=15$ or $v \geq 17$. 
\end{theorem}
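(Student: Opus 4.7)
The plan is to split the argument into a non-existence part for $v\in\{7,\ldots,14,16\}$ and an existence part for $v=15$ and $v\geq 17$, throughout translating the problem into the language of cubic bipartite graphs of girth at least $8$ via the Levi-graph correspondence noted at the start of the section.

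For non-existence, I would invoke the Moore bound for cubic graphs of girth $8$, which forces such a graph to have at least $2(1+2+4+8)=30$ vertices. Since the Levi graph of a symmetric $v_3$ has $2v$ vertices, this immediately rules out every $v\leq 14$. The case $v=16$ is the delicate one, as the Moore bound only narrowly misses it. I would argue that no cubic bipartite graph of order $32$ has girth $8$: the unique Tutte--Coxeter $(3,8)$-cage has order $30$, and it is known (and in any case confirmed by the computer enumeration referenced in the Appendix) that the next cubic bipartite graphs of girth~$8$ occur only at order $34$.

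For existence, I would first exhibit the Cremona--Richmond configuration $15_3$, whose Levi graph is the Tutte--Coxeter cage, of girth exactly $8$; this disposes of $v=15$ and, since that cage plainly contains $10$-cycles, also furnishes a seed for Proposition~\ref{vplus10}. To cover all $v\geq 17$ using that proposition, which increases $v$ by $10$, it suffices to produce one base example for each residue class modulo $10$, namely for $v=17,18,\ldots,26$. For each such $v$ I would exhibit a specific triangle-free symmetric $v_3$, equivalently a cubic bipartite graph of order $2v$, girth $\geq 8$, containing a $10$-cycle. These can be taken from known families of triangle-free configurations or cage-like constructions, checking in each case that a $10$-cycle is present so that the proposition can be applied repeatedly. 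Iterating Proposition~\ref{vplus10} from these ten base graphs then produces a triangle-free $v_3$ for every $v\geq 17$.

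The main obstacle is the $v=16$ case, where the Moore bound fails to apply by a single vertex on each side and the non-existence has to be ruled out either by a dedicated structural argument on cubic bipartite graphs of girth $8$ and order $32$ or by explicit enumeration. A secondary, more routine, burden is the production and verification of the ten base cases in the range $17\leq v\leq 26$, each requiring confirmation that a $10$-cycle is present so Proposition~\ref{vplus10} can iterate; the remainder of the argument is then straightforward induction bookkeeping.
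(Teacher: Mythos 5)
Your proposal follows essentially the same route as the paper: translate to cubic bipartite graphs of girth at least $8$ via the Levi graph, rule out $v\leq 14$ by the Moore bound (order at least $2(1+2+4+8)=30$ for girth $8$), rule out $v=16$ by the known enumeration showing no cubic bipartite girth-$8$ graph of order $32$ exists, use the Tutte--Coxeter cage (Cremona--Richmond configuration) for $v=15$, and then iterate Proposition~\ref{vplus10} from finitely many base cases. All of this matches the paper.

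Two points of divergence, one of which is a concrete error in your bookkeeping. First, you misread the increment of Proposition~\ref{vplus10}: it takes a graph of order $2v$ to one of order $2v+10$, so the number of configuration points increases by $5$, not by $10$. Consequently one needs base cases covering residues modulo $5$, not modulo $10$; the paper uses orders $30, 34, 36, 38, 42$ (i.e.\ $v=15,17,18,19,21$, with $v=20$ reached from the cage itself), whereas your ten bases $v=17,\ldots,26$ are logically harmless but more than twice what is required -- and they significantly inflate the second problem. Second, your existence part asserts that the base graphs ``can be taken from known families,'' but this is precisely where the substantive work lies: for each base one must actually exhibit a cubic bipartite graph of the right order with girth $8$ \emph{and} verify the presence of a $10$-cycle (which does not follow from girth alone), as the paper does by listing explicit configurations for orders $34$, $36$, $38$ and $42$ and stating that bipartiteness and the $10$-cycle were checked. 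Known cage tables guarantee cubic girth-$8$ graphs at these orders but not, without checking, bipartiteness or $10$-cycles, so until those ten (or, correctly, four) bases are produced and verified, your existence argument for $v\geq 17$ is a plan rather than a proof. With the increment corrected and the four explicit bases supplied, your argument coincides with the paper's.
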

\begin{proof}
The smallest cubic graph of girth 8 has order 30 and is Tutte's 8-cage. Thus there are no configurations $v_3$ with no triangles for $7 \leq v \leq 14$. The graph is bipartite, unique, contains a 10-cycle and is the Levi graph of the Cremona-Richmond configuration or generalised quadrangle GQ($2,2$). It has an elegant construction as follows. Let $S$ be a set of cardinality 6. Define the points of the configuration to be the set of unordered pairs of elements of $S$ (i.e. subsets of cardinality 2) and the lines to be the partitions of $S$ into disjoint pairs. An exhaustive computer search shows that there is no cubic bipartite graph of order 32 and girth 8, a fact confirmed on page 734 of~\cite{R}.

From Proposition~\ref{vplus10} in order to complete the proof of the theorem, it suffices to exhibit cubic bipartite graphs of girth 8 also containing a 10-cycle of orders 34, 36, 38 and 42. There is a unique graph of order 34 and girth 8 and three graphs of order 36 and girth 8, again see page 734 of~\cite{R}. We have determined that all these four graphs are bipartite and contain a 10-cycle. The corresponding configurations are given below.

\vspace*{2ex}
\begin{config}
\-\ 012 034 056 178 19a 2bc 2de 37b 39d 48e 4af 58c 5df 6ab 6eg 7fg 9cg

\-\ 012 034 056 178 19a 2bc 2de 37b 39d 48c 4af 58g 5ae 6ch 6df 7eh 9gh bfg

\-\ 012 034 056 178 19a 2bc 2de 37b 39d 48c 4fg 58e 59f 6ch 6dg 7fh abg aeh

\-\ 012 034 056 178 19a 2bc 2de 37b 39d 48e 4af 57g 5df 68c 69h acg bfh egh
\end{config}

\vspace*{1ex}
Configurations corresponding to bipartite graphs of orders 38 and 42 of girth 8 and containing a 10-cycle are also given below.

\vspace*{2ex}
\begin{config}
\-\ 012 034 056 178 19a 2bc 2de 37b 39d 48c 4af 57e 5ag 68h 69i bfh cgi dgh efi

\-\ 012 034 056 178 19a 2bc 2de 37b 39d 48c 4af 57e 5gh 68i 6fj 9gk ahi bfg cjk dhj eik
\end{config}

This completes the proof of the theorem.
\end{proof}

\section{Configurations with many triangles}\label{sec:manytri}
For any symmetric configuration $v_3$, say $C$, we let $t(C)$ denote the number of triangles in $C$. For any $v\geq 7$ we let $T(v)$ denote the maximum value of $t(C)$ for any configuration $v_3$.
\begin{theorem}\label{thm:3v}
For any $v\geq 8$, $T(v)\leq 3v$.
\end{theorem}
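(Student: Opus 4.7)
The strategy is to bound $t(B)$, the number of triangles containing $B$, for each block $B$; since $\sum_B t(B) = 3t$, it suffices to prove $\sum_B t(B) \leq 9v$. Equivalently, via the identity $b_4 = 3(4v-t)$ established in the introduction, we must show $b_4 \geq 3v$. That identity itself arises from the following block-level split: for each block $B$, the $12$ unordered pairs of neighbor blocks of $B$ meeting $B$ at two different corners split into those meeting at a non-$B$ point (each completing a triangle containing $B$) and those that are disjoint (each forming a 3-path with $B$ as middle block); hence $t(B)$ plus the number of 3-paths with middle $B$ equals $12$, and summing over all $B$ gives $3t + b_4 = 12v$.

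To bound $t(B)$ I would use a local count. For each non-$B$ point $u$, let $n_B(u) := |N(u)\cap B|$, the number of corners of $B$ adjacent to $u$ in the incidence graph; equivalently, the number of blocks through $u$ that meet $B$. Since any two blocks share at most one point and $u$ lies in exactly three blocks, $n_B(u) \le 3$; summing non-$B$ incidences across the six neighbor blocks of $B$ gives $\sum_{u \notin B} n_B(u) = 12$. Each triangle containing $B$ has a unique "opposite" corner $u \notin B$, whose two $B$-corners in the triangle lie in $N(u)\cap B$; conversely, each pair of $B$-corners adjacent to a given $u$ gives one such triangle. Hence
\[
t(B) \;=\; \sum_{u \notin B} \binom{n_B(u)}{2}.
\]
Under the constraints $\sum n_B(u) = 12$ and $n_B(u) \le 3$, convexity of $\binom{\cdot}{2}$ shows this sum attains its maximum $12$ exactly when four points have $n_B = 3$ (and the rest have $n_B = 0$); a short case analysis over the integer solutions shows moreover that the only value in $(9, 12)$ that is in principle achievable is $10$.

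When $t(B) = 12$, the four points $u_1, \ldots, u_4$ with $n_B(u_i) = 3$ and the block $B$ close up under incidence: each $u_i$'s three blocks all meet $B$ and hence lie among the six neighbor blocks of $B$. The resulting $7_3$ sub-configuration on the seven points $B \cup \{u_1, \ldots, u_4\}$ is forced to be the Fano plane; by indecomposability this would force $v = 7$, contradicting $v \ge 8$. The heart of the proof, and the main obstacle, is to handle the remaining borderline case $t(B) = 10$: convexity shows this requires three non-$B$ points with $n_B = 3$ arranged in a very rigid pattern among the neighbor blocks of $B$. I would complete the argument by propagating this pattern — examining the triangles at each of the three "high" points (by the symmetry of the forced local structure, each of them inherits a similarly large triangle count) and showing that, for $v \ge 8$, either the rigid structure forces disconnection or the blocks $B'$ adjacent to $B$ must have $t(B') < 9$ in sufficient quantity to restore the average bound $\sum_B t(B) \le 9v$ globally. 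This then yields $3t \le 9v$ and hence $t \le 3v$, as claimed.
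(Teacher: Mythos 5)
Your set-up is sound and genuinely different in flavour from the paper's: the identity $t(B)=\sum_{u\notin B}\binom{n_B(u)}{2}$ subject to $\sum_{u\notin B}n_B(u)=12$ and $n_B(u)\le 3$ is correct, as are the consequences $t(B)\le 12$, the impossibility of $t(B)=11$, and the disposal of $t(B)=12$ (the incidences close up into a $7_3$ subconfiguration, necessarily the Fano plane, contradicting connectedness when $v\ge 8$). But the proof is not complete, and the gap is exactly where you say it is: the case $t(B)=10$. Since $3t=\sum_B t(B)$ and the target average is $9$ per block, every block with $t(B)=10$ must be offset by blocks with $t(B)\le 8$, in at least matching numbers, and without double counting when several $10$-blocks would claim the same compensating neighbours. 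None of this is established: you do not prove that the three high points inherit large triangle counts (this is asserted ``by symmetry'' of a structure you have not fully pinned down), you do not prove that the rigid pattern forces either disconnection or deficient neighbouring blocks, and you give no mechanism for organising the compensation globally. As written, your argument proves only $t(B)\le 10$ for all $B$ when $v\ge 8$, i.e. $t\le 10v/3$, which is strictly weaker than the claimed $t\le 3v$.

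It is instructive that the paper faces precisely the same borderline phenomenon one level down, on edges of the Levi graph, and does the work you defer. There the count of $6$-cycles through an edge is at most $8$, with $8$ forcing the Heawood graph (so $v=7$); the borderline value $7$ forces the distance-$2$ neighbourhood of the edge to induce a Heawood graph minus one edge $uv$, a subgraph $H$ with exactly $20$ six-cycles spread over $20$ edges (average exactly $6$), attached to the rest of the graph only by two edges $ux$, $vy$ which lie in \emph{no} $6$-cycle because $u$ and $v$ are at distance $5$ in $H$. Thus every occurrence of the borderline value comes packaged with explicitly identified zero-count edges that drag the global average strictly below $6$, yielding $t<3v$ in that case, with equality possible only when every edge lies in exactly $6$ six-cycles (as in the $8_3$). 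This is the structural compensation lemma your block-level outline needs but does not supply. If you wish to rescue your route, the pigeonhole analysis of how the three points with $n_B=3$ sit among the six neighbour blocks of $B$ is the right start, but you must push it to a complete description of the neighbourhood of a $10$-block and extract from it specific nearby blocks with provably small $t(B')$, together with an accounting scheme that prevents two $10$-blocks from sharing the same compensators.
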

\begin{proof}
As noted above, the number of triangles in a configuration is precisely the number of 6-cycles in its Levi graph. We therefore count the largest possible number of 6-cycles in a cubic bipartite graph of girth 6. To do this it is most convenient to count the number of 6-cycles through any given edge.

Referring to Figure~\ref{fig:edh}, the number of 6-cycles through the edge $ab$ is equal to the number of edges between those vertices at distance 2 from $ab$, i.e. the 8 vertices between the dotted lines. There are a maximum of 8 such edges; but if all 8 are present then the graph is the Heawood graph, which is the Levi graph of the unique configuration $7_3$.

So suppose that the maximum number of 6-cycles through any vertex is 7, as in the figure. Then the 14 vertices at maximum distance 2 from $ab$ form a Heawood graph with a single edge deleted; since the Heawood graph is edge-transitive we may assume without loss that the missing edge is $uv$ as in the figure. Then the subgraph $H$ induced by these 14 vertices is joined to the rest of the graph by edges $ux$ and $vy$. Now $H$ contains exactly 20 6-cycles, since a Heawood graph has 28 and removing the edge $uv$ reduces this by 8. Thus the average number of 6-cycles through any edge in $H$ is 6. Moreover, since $u$ and $v$ are at distance 5 in $H$, there are no 6-cycles through either edge $ux$ or $vy$.

The graph may contain multiple edges lying in 7 6-cycles, so there may be more than one of these induced subgraphs isomorphic to $H$, joined to the remainder of the graph by edges which are in no 6-cycle. The remainder of the edges in the graph not in any of these induced subgraphs must lie in a maximum of 6 6-cycles, and this includes a non-zero number of edges not in any 6-cycle at all. Thus if there is at least one edge lying in 7 6-cycles, the average number of 6-cycles through all the edges in the graph is strictly less than 6. Since there are $3v$ edges in the Levi graph and each 6-cycle goes through 6 edges, it follows that there are strictly fewer than $3v$ 6-cycles in the Levi graph and hence triangles in the configuration.

The remaining possibility to achieve equality in the bound is that there are precisely 6 6-cycles through each edge in the Levi graph. This situation is attained by the unique configuration $8_3$.
\end{proof}
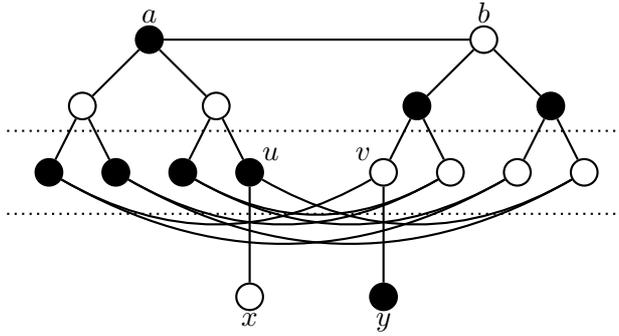
\begin{figure}
    \centering
    \begin{tikzpicture}[x=0.2mm,y=-0.2mm,inner sep=0.2mm,scale=0.55,thick,vertex/.style={circle,draw,minimum size=10,fill=lightgray}]
\node at (170,256) [vertex,fill=black,label=above:{$a$}] (v13) {};
\node at (570,256) [vertex,fill=white,label=above:{$b$}] (v12) {};
\node at (490,336) [vertex,fill=black] (v1) {};
\node at (90,336) [vertex,fill=white] (v4) {};
\node at (250,336) [vertex,fill=white] (v8) {};
\node at (650,336) [vertex,fill=black] (v11) {};
\node at (50,416) [vertex,fill=black] (v5) {};
\node at (130,416) [vertex,fill=black] (v9) {};
\node at (210,416) [vertex,fill=black] (v7) {};
\node at (290,416) [vertex,fill=black,label=above right:{$u$}] (v3) {};
\node at (450,416) [vertex,fill=white,label=above left:{$v$}] (v2) {};
\node at (530,416) [vertex,fill=white] (v14) {};
\node at (610,416) [vertex,fill=white] (v6) {};
\node at (690,416) [vertex,fill=white] (v10) {};
\node at (290,566) [vertex,fill=white,label=below:{$x$}] (v16) {};
\node at (450,566) [vertex,fill=black,label=below:{$y$}] (v15) {};
\draw[dotted] (0,366) -- (740,366);
\draw[dotted] (0,466) -- (740,466);
\path
	(v1) edge (v2)
	(v1) edge (v14)
	(v3) edge (v8)
	(v4) edge (v5)
	(v4) edge (v13)
	(v5) edge[bend right] (v6)
	(v7) edge[bend right] (v6)
	(v7) edge (v8)
	(v9) edge[bend right] (v10)
	(v9) edge[bend right] (v14)
	(v10) edge (v11)
	(v11) edge (v12)
	(v12) edge (v13)
	(v7) edge[bend right] (v14)
	(v5) edge[bend right] (v2)
	(v3) edge[bend right] (v10)
	(v6) edge (v11)
	(v4) edge (v9)
	(v1) edge (v12)
	(v8) edge (v13)
	(v3) edge (v16)
	(v2) edge (v15)
	;
\end{tikzpicture}
    \caption{An edge $ab$ contained in 7 6-cycles}
    \label{fig:edh}
\end{figure}
We note that the proof of Theorem~\ref{thm:3v} implies that equality exists in the bound if and only if every edge in the Levi graph lies in exactly 6 6-cycles. Such a graph is called \emph{edge-girth-regular} and these objects were studied in~\cite{J}. The only known edge-girth-regular graph of degree 3, girth 6 and with every edge lying in 6 girth cycles is the M\"obius-Kantor graph, which is the Levi graph of the unique configuration $8_3$. It is an open question whether there exist further configurations where equality is achieved in the bound of Theorem~\ref{thm:3v}.

\begin{theorem}
\[3\geq \limsup_{v\to\infty}\frac{T(v)}{v}\geq \liminf_{v\to\infty}\frac{T(v)}{v}\geq\frac{20}{7}.\]
\end{theorem}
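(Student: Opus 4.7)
The upper bound $\limsup_{v\to\infty} T(v)/v \leq 3$ is immediate from Theorem~\ref{thm:3v}. For the lower bound, my plan is an inductive construction of connected symmetric configurations $C_k$ on $v_k=7k$ points with $t(C_k)\geq 20k$ for every $k\geq 2$, working throughout in the language of Levi graphs.

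For the base case $k=2$, I perform a single-edge switch of two disjoint Heawood graphs $H_1,H_2$: in each $H_i$ choose an edge $p_iB_i$, delete both edges, and add the two crossing edges $p_1B_2$, $p_2B_1$. The result is cubic, bipartite, connected, and of girth $6$ (the cross-pairs newly introduced lay in no block before). Each deleted Heawood edge was in $8$ six-cycles, while neither new crossing lies in any six-cycle, since such a cycle would have to use both crossings, giving length at least $1+5+1+5=12$---deleting an edge of Heawood leaves its endpoints at distance exactly $5$ in the remaining graph. Hence $v_2=14$ and $t(C_2)=56-16=40=(20/7)v_2$.

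For the inductive step, I maintain the invariant that $C_k$ contains a crossing edge $e_k$ lying in no six-cycle; equivalently, the endpoints of $e_k$ are at distance at least $7$ in $C_k-e_k$ (bipartite parity together with girth $6$ rule out distances $1,3,5$). Choose such an $e_k$ and an arbitrary edge $f_k$ of the Heawood graph of a fresh Fano plane $F_{k+1}$, and perform the same single-edge switch. Deleting $e_k$ destroys no six-cycles and deleting $f_k$ destroys $8$, while the graph of $F_{k+1}$ contributes $28$, giving a net gain of $20$ triangles on $7$ new points. The two new crossings again lie in no six-cycle: any such cycle would need to exit $F_{k+1}$ through the other crossing and then traverse at least distance $7$ in $C_k-e_k$, for total length at least $5+1+7=13$. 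The old crossings of $C_k$ remain in no six-cycle by the same argument (any shortcut through $F_{k+1}$ must use both new crossings and adds at least $7$ to the path length). Iterating yields $C_k$ with $v_k=7k$ and $t(C_k)=20k$ for all $k\geq 2$.

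For $v$ not a multiple of $7$, write $v=7k+r$ with $r\in\{1,\ldots,6\}$ and apply $r$ ``point-insertions'' to $C_k$: each removes two blocks $\{a,b,c\},\{d,e,f\}$ whose cross-pairs $\{a,d\},\{b,e\},\{c,f\}$ are in no other block (possible for large $v$ since at most $3v$ of the $\binom{v}{2}$ pairs appear in blocks), and adds the three blocks $\{n,a,d\},\{n,b,e\},\{n,c,f\}$ through a new point $n$. Each insertion raises $v$ by $1$ and changes $t$ by only $O(1)$, giving $T(v)\geq 20k - O(1)$ and hence $T(v)/v\to 20/7$. The principal technical hurdle is maintaining the ``no six-cycle through the crossing'' invariant across the inductive steps; it is precisely this property that lowers the per-step cost of each join from the naive $16$ to $8$, and so turns the crude ratio $12/7$ into the required $20/7$.
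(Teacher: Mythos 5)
Your proposal is correct, and its core coincides with the paper's: both constructions amount to copies of the edge-deleted Heawood graph (with its $28-8=20$ six-cycles, and endpoints of the deleted edge at distance $5$) linked by bridge edges lying in no six-cycle, yielding $7k$ points and $20k$ triangles; the paper assembles $n$ copies in a single cyclic join, whereas you build the same family incrementally by edge-switching into a crossing edge, with the distance-$\geq 7$ invariant doing the work that the paper's distance-$5$ observation does globally. Where you genuinely diverge is in handling $v\not\equiv 0 \pmod 7$: the paper splices into the cyclic join one edge-deleted Levi graph of a small configuration on $7$ to $13$ points (covering all residues, and only needing that deleting an edge of a girth-$6$ bipartite cubic graph leaves its endpoints at distance $\geq 5$), while you perform up to six single-point insertions, a local surgery on the configuration itself. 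Your method works but is the one place the argument is under-justified: the bare count ``at most $3v$ of the $\binom{v}{2}$ pairs are covered'' does not by itself produce two \emph{disjoint} blocks admitting a perfect matching of uncovered cross-pairs --- one should note that at most $6$ blocks meet a given block and only a bounded number of points are collinear with its three points, so bad choices of the second block are $O(1)$ in number; and the ``$O(1)$ change in $t$'' deserves the explicit remark that any block lies in at most $12$ triangles (two points of the block, two further blocks through each), so each insertion alters $t$ by a bounded amount. The paper's splice avoids these verifications at the cost of invoking the existence of configurations $v_3$ for $7\leq v\leq 13$; your surgery is more self-contained but needs the routine counting made precise. Both give $T(7k+r)\geq 20k-O(1)$ and hence the stated $\liminf$ bound.
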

\begin{proof}
The upper bound for the $\limsup$ follows immediately from Theorem~\ref{thm:3v}. To demonstrate the lower bound, we construct an infinite family of configurations on $7n$ points, $n\geq 2$, with exactly $20n$ triangles. To do this we take $n$ copies $H_1,H_2,\ldots,H_n$ of the edge-deleted Heawood graph $H$ from the proof of Theorem~\ref{thm:3v} and connect them via their joining edges in a cyclic manner. The construction is illustrated in Figure~\ref{fig:v21} for the case $n=3$. Since the joining edges do not lie in any 6-cycles, the resulting graph is the Levi graph of a configuration with $7n$ points and $20n$ triangles as required. To complete the proof we need to show that for any $\epsilon>0$, there is some $v_0$ such that for any $v\geq v_0$ we can construct a configuration $C$ on $v$ points with $t(C)/v\geq 20/7-\epsilon$. It is easily seen that we can do this by constructing a graph with a large number $n$ of copies of $H$ plus one edge-deleted Levi graph of a configuration on $7,8,9,10,11,12$ or $13$ points, and join them cyclically as before. We can choose $n$ large enough so that the number of triangles in the resulting configuration $v_3$ will be arbitrarily close to $20v/7$.
\end{proof}
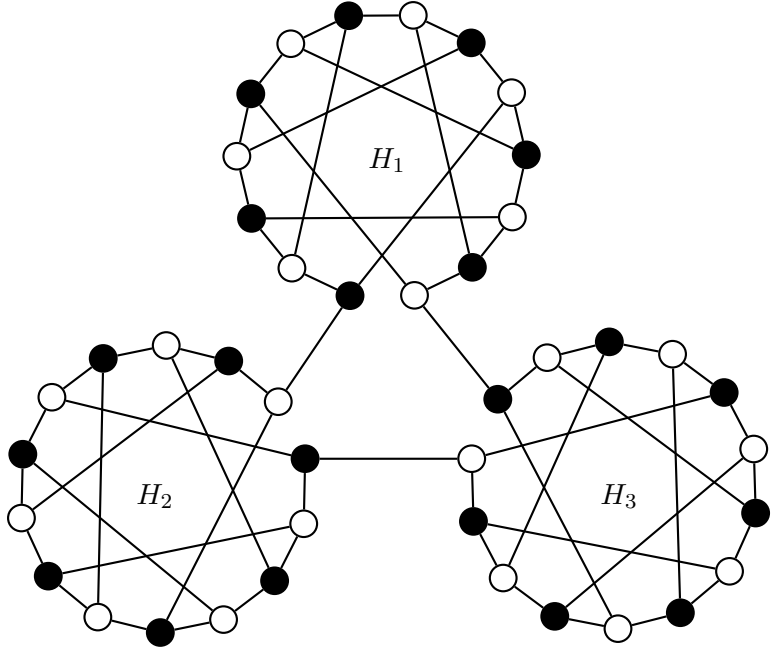
\begin{figure}
    \centering
    \begin{tikzpicture}[x=0.3mm,y=-0.3mm,inner sep=0.2mm,scale=0.55,thick,wvertex/.style={circle,draw,minimum size=10,fill=white},bvertex/.style={circle,draw,minimum size=10,fill=black}]
\node at (385,230) (H1) {$H_1$};
\node at (200,500) (H2) {$H_2$};
\node at (570,500) (H3) {$H_3$};
\node at (204.3,609.7) [bvertex] (v1) {};
\node at (154.5,597.3) [wvertex] (v2) {};
\node at (115,564.4) [bvertex] (v3) {};
\node at (209,379) [wvertex] (v4) {};
\node at (158.8,389.4) [bvertex] (v5) {};
\node at (117.9,420.6) [wvertex] (v6) {};
\node at (94.7,466.4) [bvertex] (v7) {};
\node at (93.6,517.7) [wvertex] (v8) {};
\node at (295.4,568.1) [bvertex] (v9) {};
\node at (318.6,522.3) [wvertex] (v10) {};
\node at (319.7,470) [bvertex] (v11) {};
\node at (298.3,424.2) [wvertex] (v12) {};
\node at (258.8,391.5) [bvertex] (v13) {};
\node at (254.7,599.3) [wvertex] (v14) {};
\node at (453,316.2) [bvertex] (v15) {};
\node at (406.9,338.7) [wvertex] (v16) {};
\node at (355.5,339) [bvertex] (v17) {};
\node at (308.3,136.5) [wvertex] (v18) {};
\node at (276.4,176.8) [bvertex] (v19) {};
\node at (265.3,226.9) [wvertex] (v20) {};
\node at (277,276.9) [bvertex] (v21) {};
\node at (309.2,316.9) [wvertex] (v22) {};
\node at (496,225.8) [bvertex] (v23) {};
\node at (484.3,175.8) [wvertex] (v24) {};
\node at (452.1,135.8) [bvertex] (v25) {};
\node at (405.9,113.7) [wvertex] (v26) {};
\node at (354.4,114) [bvertex] (v27) {};
\node at (484.9,275.8) [wvertex] (v28) {};
\node at (678.8,513.7) [bvertex] (v29) {};
\node at (658,560.5) [wvertex] (v30) {};
\node at (618.8,593.8) [bvertex] (v31) {};
\node at (452.5,470) [wvertex] (v32) {};
\node at (453.9,520.4) [bvertex] (v33) {};
\node at (477.6,565.9) [wvertex] (v34) {};
\node at (518.7,596.7) [bvertex] (v35) {};
\node at (569.1,606.6) [wvertex] (v36) {};
\node at (653.7,416.8) [bvertex] (v37) {};
\node at (612.6,386) [wvertex] (v38) {};
\node at (562.2,376.1) [bvertex] (v39) {};
\node at (512.6,388.8) [wvertex] (v40) {};
\node at (473.3,422.2) [bvertex] (v41) {};
\node at (677.3,462.2) [wvertex] (v42) {};
\path
	(v1) edge (v2)
	(v1) edge (v14)
	(v2) edge (v3)
	(v3) edge (v8)
	(v4) edge (v5)
	(v4) edge (v13)
	(v5) edge (v6)
	(v6) edge (v7)
	(v7) edge (v8)
	(v9) edge (v10)
	(v9) edge (v14)
	(v10) edge (v11)
	(v12) edge (v13)
	(v7) edge (v14)
	(v2) edge (v5)
	(v3) edge (v10)
	(v6) edge (v11)
	(v4) edge (v9)
	(v1) edge (v12)
	(v8) edge (v13)
	(v15) edge (v16)
	(v15) edge (v28)
	(v17) edge (v22)
	(v18) edge (v19)
	(v18) edge (v27)
	(v19) edge (v20)
	(v20) edge (v21)
	(v21) edge (v22)
	(v23) edge (v24)
	(v23) edge (v28)
	(v24) edge (v25)
	(v25) edge (v26)
	(v26) edge (v27)
	(v21) edge (v28)
	(v16) edge (v19)
	(v17) edge (v24)
	(v20) edge (v25)
	(v18) edge (v23)
	(v15) edge (v26)
	(v22) edge (v27)
	(v29) edge (v30)
	(v29) edge (v42)
	(v30) edge (v31)
	(v31) edge (v36)
	(v32) edge (v33)
	(v33) edge (v34)
	(v34) edge (v35)
	(v35) edge (v36)
	(v37) edge (v38)
	(v37) edge (v42)
	(v38) edge (v39)
	(v39) edge (v40)
	(v40) edge (v41)
	(v35) edge (v42)
	(v30) edge (v33)
	(v31) edge (v38)
	(v34) edge (v39)
	(v32) edge (v37)
	(v29) edge (v40)
	(v36) edge (v41)
	(v16) edge (v41)
	(v11) edge (v32)
	(v12) edge (v17)
	;
\end{tikzpicture}
    \caption{The Levi graph of a configuration on 21 points with 60 triangles}
    \label{fig:v21}
\end{figure}%

\newpage
We note that the graph depicted in Figure~\ref{fig:v21} has appeared before in the literature. For example, in previous works of the present authors it arises in connection with colouring problems~\cite{EGS1} and embeddings~\cite{EGS2}.

\section{Cyclic configurations}\label{sec:cyclic}
A symmetric configuration $v_3$ is cyclic if it admits an automorphism of order $v$. Such a configuration can be realised on the set $\{0,1,\ldots, v-1\}$ as a single orbit under the mapping $i \mapsto i+1$ (mod $v$) of a starter block $\{0,a,a+b\},~a,b\in \{1,2,\ldots,v-2\},~a+b \leq v-1$. The orbit can be described as a cyclically ordered triple $\langle a,b,c \rangle$ where $a+b+c=v$. However in order to generate a configuration, certain requirements need to be met: $1 \leq a,b,c \leq v-3,~a \neq b \neq c \neq a$ and if $v$ is even, $a,b,c \neq v/2$. Further, for the configuration to be connected we must have $\gcd(a,b,c) =1$. We are able to give a complete analysis of the occurrence of triangles in these configurations.

\begin{theorem}\label{cyctri}
Let $\mathcal{C}$ be a cyclic symmetric configuration $v_3$. Then the number of triangles contained in $\mathcal{C}$ is (i) $4v$ if $v=7$, (ii) $3v$ if $v=8$, (iii) $7v/3$ if $v=9$ and (iv) $v, 4v/3$ or $2v$ if $v \geq 10$.
\end{theorem}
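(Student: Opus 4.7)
The plan is to convert the counting of triangles into arithmetic modulo $v$. Write $\mathcal{C} = \langle a, b, c\rangle$ with $a + b + c = v$, and set $D = \{\pm a, \pm b, \pm c\}\subset\mathbb{Z}/v\mathbb{Z}$; the hypotheses on $(a, b, c)$ imply $|D| = 6$. A triangle corresponds to three distinct points $\{x, y, z\}$ with all pairwise differences in $D$ that do not form a block. Parametrising such an ordered triple by $(x, d_1, d_2)$ with $d_1 = y - x$ and $d_2 = z - y$, we need $d_1, d_2, -d_1 - d_2 \in D$; the total number of ordered triples is therefore $vS$, where
\[
S \;=\; \#\{(d_1, d_2, d_3) \in D^3 : d_1 + d_2 + d_3 \equiv 0 \pmod v\}.
\]
Subtracting the $6v$ orderings that realise a block and dividing by $6$ for each triangle's orderings yields the master formula $t(\mathcal{C}) = v(S - 6)/6$.

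I next classify the solutions contributing to $S$ by the multiset of unsigned types $|d_1|, |d_2|, |d_3|\in\{a, b, c\}$. The type $\{a, b, c\}$ always contributes $12$: any mixed sign pattern produces a sum equal to $\pm(v - 2x)$ for some $x\in\{a, b, c\}$, which vanishes mod $v$ only when $x = v/2$, an excluded value, so only $(+,+,+)$ and $(-,-,-)$ survive with $3!$ orderings each, and exactly six of the resulting triples are orderings of the starter block. For distinct $x, y$, type $\{x, x, y\}$ contributes $6$ precisely when $2x \equiv \pm y \pmod v$, a condition I denote $R_{x, y}$; type $\{x, x, x\}$ contributes $2$ precisely when $3x \equiv 0 \pmod v$, denoted $R_x$. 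Letting $r$ and $s$ count the satisfied conditions among the six $R_{x, y}$ and the three $R_x$ respectively, we obtain $S = 12 + 6r + 2s$, whence
\[
t(\mathcal{C}) \;=\; v + rv + \tfrac{1}{3}sv.
\]

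The theorem thus reduces to identifying the realisable pairs $(r, s)$. Clearly $s\le 1$: two of $a, b, c$ lying in $\{v/3, 2v/3\}$ would force the third to equal $0$. To bound $r$ for $v\ge 10$ I would classify any two simultaneously satisfied pair conditions by the overlap of their index pairs, viewed as directed edges on $\{a, b, c\}$. A shared source $R_{x, y}\land R_{x, z}$ forces $y\equiv \pm z\pmod v$, impossible since $y \ne z$ and $y + z = v - x \not\equiv 0$. A shared target $R_{x, y}\land R_{z, y}$ with $x\ne z$ forces $v$ even and $|x - z| = v/2$; combining with $a + b + c = v$, the exclusion of $v/2$, and $\gcd(a, b, c) = 1$ isolates only $v = 8$ with $(a, b, c) = (1, 2, 5)$. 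A reversed pair $R_{x, y}\land R_{y, x}$ forces $3x \equiv 0$ or $5x \equiv 0\pmod v$, and in each sub-case the remaining constraints force a repetition among $a, b, c$. A path $R_{x, y}\land R_{y, z}$ with $\{x, y, z\} = \{a, b, c\}$ gives $y \equiv \pm 2x$ and $z \equiv \pm 4x$; among the four sign choices only $(+, +)$ is admissible, forcing $7x \equiv 0$, and then $\gcd(a, b, c) = 1$ forces $v = 7$, $(a, b, c) = (1, 2, 4)$. Hence $r \ge 2$ forces $v \in \{7, 8\}$, so $r \le 1$ for $v \ge 10$. For $r = s = 1$: if the cube variable lies in the pair condition's index pair, direct substitution forces a repetition or a zero; otherwise the two conditions involve all three of $a, b, c$, and solving the linear system together with $a + b + c = v$ yields $(a, b, c)$ proportional to one of $(1, 2, 6), (1, 3, 5), (2, 3, 4)$ (or a rotation) with $v = 9\lambda$, so $\gcd(a, b, c) = 1$ forces $\lambda = 1$ and $v = 9$. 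Therefore $r + s \le 1$ whenever $v \ge 10$, giving $t(\mathcal{C}) \in \{v, 4v/3, 2v\}$.

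The small cases are handled by direct verification. For $v = 7$ the admissible cyclic triples are rotations of $(1, 2, 4)$ and $(1, 4, 2)$, each satisfying three pair conditions in a directed 3-cycle, so $(r, s) = (3, 0)$ and $t = 4v$. For $v = 8$ the admissible triples are rotations of $(1, 2, 5)$ and $(1, 5, 2)$, each satisfying two pair conditions sharing a target, so $(r, s) = (2, 0)$ and $t = 3v$. For $v = 9$ there are six admissible cyclic classes, namely the rotations of $(1, 2, 6), (1, 6, 2), (1, 3, 5), (1, 5, 3), (2, 3, 4), (2, 4, 3)$; in each, one pair condition and one cube condition hold, giving $(r, s) = (1, 1)$ and $t = 7v/3$. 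The main obstacle is the diophantine case analysis in the preceding paragraph, especially isolating the M\"obius--Kantor triple in the shared-target case and the $v = 9$ family in the disjoint pair-plus-cube case; in each, the $\gcd$ condition is essential to rule out proportional scalings yielding larger $v$.
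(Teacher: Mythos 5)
Your proposal is correct and takes essentially the same approach as the paper: both identify the incidence graph with $\mathrm{Cay}(\mathbb{Z}_v,\{\pm a,\pm b,\pm c\})$, count triangles as zero-sum triples of differences, and show that beyond the generic $v$ triangles the only extra coincidences (your $R_{x,y}\colon 2x\equiv\pm y$, $R_x\colon 3x\equiv 0$, and their combinations) force $v\in\{7,8,9\}$ or contradict the validity/gcd conditions. Your treatment is somewhat more systematic than the paper's -- the master formula $t=v(1+r+s/3)$ together with the exhaustive directed-edge case analysis recovers the values for $v=7,8,9$ from the same computation, whereas the paper handles those cases by quoting the uniqueness of the small configurations and checking them directly.
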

\begin{proof}
The configurations $7_3$ and $8_3$ are unique up to isomorphism and are cyclic. It is easily determined that the number of triangles they contain are 28 and 24 respectively. Of the three non-isomorphic configurations $9_3$, only one is cyclic and again it is easily determined that the number of triangles is 21.

Now let $v \geq 10$. The incidence graph is a Cayley graph Cay($\mathbb{Z}_v, S$) where the connection set $S = \{\pm a, \pm b, \pm c\}$. Triangles in the configuration will occur as 3-cycles in the incidence graph. We count these by identifying edges which sum to zero, noting that $(x,y,z)$, $(y,z,x)$, $(z,x,y)$, $(-x,-z,-y)$, $(-z,-y,-x)$ and $(-y,-x,-z)$ all count the same triangles. First consider the two sets of edges $(a,b,c)$ and $(a,c,b)$; the first of these correspond to blocks of the configuration and the second to $v$ triangles in the configuration.

There are two further situations where triangles occur. The first of these is where, without loss of generality, either $b=2a$ or $c=2a$, i.e. the cyclically ordered triple $\langle a,b,c \rangle$ is either $\langle a,2a,v-3a \rangle$. or $\langle a,v-3a,2a \rangle$. (Note that $b=v-2a$ and $c=v-2a$ are not possible.) In this case there are a further $v$ triangles corresponding to edges $(a,a,-2a)$ in the incidence graph. Potentially an orbit of this form may generate further triangles from 3-cycles as follows; $(2a,2a,3a)$, $(3a,3a,a)$ and $(3a,3a,2a)$. If $\gcd(a,v)=1$, the first two can only appear when $v=7$ and the third when $v=8$. This accounts for the ``extra'' triangles in these configurations noted above. If $\gcd(a,v) > 1$, the configurations are disconnected.

The second situation can only occur when $v \equiv 0$ (mod 3) and is without loss of generality when $a = v/3$ or $a = 2v/3$. There are $v/3$ triangles $\langle v/3,v/3,v/3 \rangle$ in the configuration corresponding to edges $(v/3,v/3,v/3)$ in the incidence graph. It remains to consider whether both of these two situations can occur simultaneously. There are six possibilities: $(v/3,v/6,v/2)$, $(v/3,v/2,v/6)$, $(v/3,2v/9,4v/9)$, $(v/3,4v/9,2v/9)$, $(2v/3,v/9,2v/9)$ and $(2v/3,2v/9,v/9)$. The first two can only occur when ${v=6s,s>1}$ and the configurations will be disconnected. The other four occur when ${v=9s,s \geq 1}$. When $s=1$, again this accounts for the ``extra'' triangles in the cyclic $9_3$ configuration. When $s > 1$, the configurations are disconnected.  
\end{proof}

\section{Conclusion}\label{sec:conclu}
In this paper we have begun the study of so-called fragments in symmetric configurations with block size 3. This follows from similar work with respect to Steiner triple systems which was published nearly 30 years ago~\cite{GGM}. An entire chapter of~\cite{CR} is devoted to this and many other aspects of this work due to various authors. We are of the opinion that there is still much which can be done in this area relating to configurations and below we indicate some possible avenues for future research.

As we have shown, the smallest fragment which can be avoided in an infinite number of symmetric configurations is the triangle. Such a configuration on 15 points, the Cremona-Richmond configuration, was already known and we have shown that there exist such configurations $v_3$ for all $ v \geq 17$.  We have also considered symmetric configurations containing ``many'' triangles. A further aspect of this work is that of decomposition. It is relevant to ask about symmetric configurations whose blocks can be decomposed into triangles (with one or two extra blocks if $v$ is not divisible by 3).
Indeed the whole question of the decomposition of configurations $v_3$ into \emph{any} given fragment appears to be open.

The work in this paper may also be extended to non-symmetric configurations $(v_r,b_3)$. Elementary calculations give $b=rv/3$, $a_2=r(r-1)v/2$ and $a_1=rv(rv-9r+6)$ for the numbers of blocks, intersecting triples and disjoint triples respectively; thus an additional parameter $r$, the replication number, is introduced into the equations. Even further, configurations $(v_r,b_3)_\lambda$ where any pair of distinct points is contained in at most $\lambda$ blocks, can be considered. Those configurations with $\lambda = 2$ are called \emph{spatial}. Symmetric spatial configurations are the subject of a paper by Gropp~\cite{Sp} and even though this class of configurations is quite restrictive, results would still be of interest. Two further two-block fragments, $A_3$, abc, abd and $A_4$, abc, abc (repeated triple) may also occur; so, unlike the case where $\lambda = 1$, two-block fragments would not be constant.

However perhaps the most obvious investigation would be to extend the study of fragments in symmetric configurations $v_3$ to those with four blocks. There are 16 four-block fragments which are illustrated on page 210 of~\cite{CR} together with their standard labelling. One of these $C_7$, abc, ade, afg, ahi (4-star) cannot occur. Nevertheless a complete analysis of the number of occurrences of the remaining 15 fragments would seem to be a lengthy and perhaps tedious although interesting undertaking. But a subproject could be to consider only those symmetric configurations which have no triangles, i.e. those whose Levi graph has girth at least 8. This would eliminate $C_6$, $C_8$, $C_{11}$, $C_{12}$, $C_{14}$, $C_{15}$ and $C_{16}$ from the configuration leaving just 8 fragments to consider which is a much more feasible investigation. Within this scenario an 8-cycle in the Levi graph corresponds to a square, i.e. fragment $C_{10}$, abx, bcy, cdz, daw. Thus the construction of cubic bipartite graphs of girth greater than or equal to 10 would give configurations $v_3$ avoiding $C_{10}$ and all fragments containing triangles. A result analogous to Proposition~\ref{vplus10} would go a long way to establishing this.
There is still much to investigate and we hope that the reader  will be inspired to consider some of these ideas.    

\section*{Appendix}
\begin{table}[h!]
\begin{minipage}[t]{0.2\textwidth}
\vspace{0pt}
\begin{tabular}{|r|r|r|}
	\hline 
	$v$ & $t$ & Count \\ 
	\hline 
	7 & 28 & 1 \\
	\hline
	8 & 24 & 1 \\
	\hline
	9 & 18 & 1 \\
	9 & 20 & 1 \\
	9 & 21 & 1 \\
	\hline
	10 & 17 & 2 \\
	10 & 18 & 3 \\
	10 & 19 & 2 \\
	10 & 20 & 3 \\
	\hline
	11 & 15 & 1 \\
	11 & 16 & 10 \\
	11 & 17 & 7 \\
	11 & 18 & 7 \\
	11 & 19 & 3 \\
	11 & 20 & 1 \\
	11 & 21 & 1 \\
	11 & 22 & 1 \\
	\hline
	12 & 12 & 1 \\
	12 & 13 & 8 \\
	12 & 14 & 22 \\
	12 & 15 & 48 \\
	12 & 16 & 60 \\
	12 & 17 & 41 \\
	12 & 18 & 24 \\
	12 & 19 & 14 \\
	12 & 20 & 5 \\
	12 & 21 & 3 \\
	12 & 22 & 1 \\
	12 & 24 & 2 \\
	\hline
	13 & 9 & 1 \\
	13 & 10 & 2 \\
	13 & 11 & 12 \\
	13 & 12 & 67 \\
	13 & 13 & 190 \\
	13 & 14 & 371 \\
	13 & 15 & 418 \\
	\hline
\end{tabular}
\end{minipage} \hfill
\begin{minipage}[t]{0.2\textwidth}
\vspace{0pt}
\begin{tabular}{|r|r|r|}
	\hline 
	$v$ & $t$ & Count \\ 
	\hline 
	13 & 16 & 409 \\
	13 & 17 & 265 \\
	13 & 18 & 156 \\
	13 & 19 & 74 \\
	13 & 20 & 37 \\
	13 & 21 & 14 \\
	13 & 22 & 9 \\
	13 & 23 & 4 \\
	13 & 24 & 3 \\
	13 & 25 & 1 \\
	13 & 26 & 1 \\
	13 & 28 & 1 \\
	13 & 32 & 1 \\
	\hline
	14 & 6 & 1 \\
	14 & 8 & 5 \\
	14 & 9 & 24 \\
	14 & 10 & 145 \\
	14 & 11 & 521 \\
	14 & 12 & 1512 \\
	14 & 13 & 2901 \\
	14 & 14 & 4086 \\
	14 & 15 & 4121 \\
	14 & 16 & 3247 \\
	14 & 17 & 2236 \\
	14 & 18 & 1304 \\
	14 & 19 & 640 \\
	14 & 20 & 335 \\
	14 & 21 & 159 \\
	14 & 22 & 69 \\
	14 & 23 & 33 \\
	14 & 24 & 36 \\
	14 & 25 & 5 \\
	14 & 26 & 2 \\
	14 & 27 & 4 \\
	14 & 28 & 6 \\
	14 & 31 & 2 \\
	\hline
\end{tabular}
\end{minipage} \hfill
\begin{minipage}[t]{0.2\textwidth}
\vspace{0pt}
\begin{tabular}{|r|r|r|}
	\hline 
	$v$ & $t$ & Count \\ 
	\hline 
	14 & 32 & 3 \\
	14 & 40 & 1 \\
	\hline
	15 & 0 & 1 \\
	15 & 4 & 3 \\
	15 & 6 & 12 \\
	15 & 7 & 40 \\
	15 & 8 & 254 \\
	15 & 9 & 1129 \\
	15 & 10 & 4252 \\
	15 & 11 & 11877 \\
	15 & 12 & 24510 \\
	15 & 13 & 38017 \\
	15 & 14 & 44834 \\
	15 & 15 & 41585 \\
	15 & 16 & 32177 \\
	15 & 17 & 20914 \\
	15 & 18 & 12585 \\
	15 & 19 & 6517 \\
	15 & 20 & 3341 \\
	15 & 21 & 1617 \\
	15 & 22 & 851 \\
	15 & 23 & 404 \\
	15 & 24 & 206 \\
	15 & 25 & 62 \\
	15 & 26 & 54 \\
	15 & 27 & 39 \\
	15 & 28 & 23 \\
	15 & 29 & 6 \\
	15 & 30 & 12 \\
	15 & 31 & 9 \\
	15 & 32 & 6 \\
	15 & 33 & 1 \\
	15 & 38 & 2 \\
	15 & 40 & 1 \\
	\hline
	16 & 3 & 1 \\
	16 & 4 & 5 \\
	\hline
\end{tabular}
\end{minipage} \hfill
\begin{minipage}[t]{0.2\textwidth}
\vspace{0pt}
\begin{tabular}{|r|r|r|}
	\hline 
	$v$ & $t$ & Count \\ 
	\hline 
	16 & 5 & 40 \\
	16 & 6 & 277 \\
	16 & 7 & 1699 \\
	16 & 8 & 8325 \\
	16 & 9 & 31782 \\
	16 & 10 & 92432 \\
	16 & 11 & 206506 \\
	16 & 12 & 357339 \\
	16 & 13 & 480580 \\
	16 & 14 & 517343 \\
	16 & 15 & 458294 \\
	16 & 16 & 344331 \\
	16 & 17 & 225866 \\
	16 & 18 & 133408 \\
	16 & 19 & 72369 \\
	16 & 20 & 37472 \\
	16 & 21 & 18591 \\
	16 & 22 & 9255 \\
	16 & 23 & 4570 \\
	16 & 24 & 2123 \\
	16 & 25 & 871 \\
	16 & 26 & 650 \\
	16 & 27 & 310 \\
	16 & 28 & 169 \\
	16 & 29 & 101 \\
	16 & 30 & 82 \\
	16 & 31 & 43 \\
	16 & 32 & 18 \\
	16 & 33 & 5 \\
	16 & 34 & 1 \\
	16 & 35 & 1 \\
	16 & 36 & 12 \\
	16 & 37 & 4 \\
	16 & 38 & 1 \\
	16 & 39 & 1 \\
	\hline
\end{tabular}
\end{minipage}
\caption{Numbers of triangles $t$ in configurations $v_3$}
\label{tab:triangles}
\end{table}

\section*{Acknowledgements}
The third author acknowledges support from the APVV Research Grants 15-0220 and 17-0428, and the VEGA Research Grants 1/0142/17 and 1/0238/19.

\newpage

\end{document}